 \newtheorem{theorem}{Theorem}[section]
 \newtheorem{corollary}[theorem]{Corollary}
 \theoremstyle{definition}
 \newtheorem{definition}[theorem]{Definition}
 \theoremstyle{remark}
 \numberwithin{equation}{section}
\def \no#1#2#3 {{\bf #1} (#3), #2.}
\def \eds#1#2#3 {#1, #2, #3.}
\def\e{{\rm e}}
\def\d{{\rm d}}
\def\Y{{\rm Y}}
\def\T{{\mathbb T}}
\def\:{{\colon}}
\def\be#1{\begin{equation}\label{#1}}
\def\ee{\end{equation}}
\def\<{\langle}
\def\>{\rangle}
\def\coloneqq{:=}
\newcommand{\p}{\partial}
\newcommand{\RR}{\mathbb{R}}
\newcommand{\eqnb}{\begin{equation}}
\newcommand{\eqne}{\end{equation}}
\newcommand\blfootnote[1]{%
  \begingroup
  \renewcommand\thefootnote{}\footnote{#1}%
  \addtocounter{footnote}{-1}%
  \endgroup
}
\begin{document}
\title{A sufficient integral condition for local regularity of solutions to the surface growth model}
\author{Wojciech S. O\.za\'nski}
\maketitle
\blfootnote{\noindent Mathematics Institute, Zeeman Building, University of Warwick, Coventry CV4 7AL, UK\\ w.s.ozanski@warwick.ac.uk}


\begin{abstract}
The surface growth model, $u_t + u_{xxxx} + \p_{xx} u_x^2 =0$, is a one-dimensional fourth order equation, which shares a number of striking similarities with the three-dimensional incompressible Navier--Stokes equations, including the results regarding existence and uniqueness of solutions and the partial regularity theory. Here we show that a weak solution of this equation is smooth on a space-time cylinder $Q$ if the Serrin condition $u_x\in L^{q'}L^q (Q)$ is satisfied, where $q,q'\in [1,\infty ]$ are such that either $1/q+4/q'<1$ or $1/q+4/q'=1$, $q'<\infty$.

\end{abstract}


\section{Introduction}
We consider the one-dimensional model of surface growth
\begin{equation}\label{SGM}
u_t+u_{xxxx}+\partial_{xx}u_x^2=0
\end{equation}
on the one-dimensional torus $\T$, under the assumption that $\int_{\T}u=0$. This equation originates from a model of epitaxy, see \cite{Si-Pl:94b}, \cite{MW:11}, \cite{SteWin:05}, \cite{Cu-Va-Ga:05} and \cite{Ra-Li-Ha:00a}
As observed by \cite{blomkerromito09,blomkerromito12} this model shares many striking similarities with the three-dimensional incompressible Navier--Stokes equations (NSE), including the results regarding existence and uniqueness of solutions. 
For example their 2009 paper proves local existence in the critical space $\dot{H}^{1/2}$ and (spatial) smoothness for solutions bounded in $L^{8/(2\alpha -1)} ((0,T); H^\alpha )$ for all $\alpha \in (1/2,9/2)$. The 2012 paper proves local existence in a critical space of a similar type to that occurring in the paper by \cite{koch_tataru} for the NSE.
Very recently \cite{Ozanski_Robinson_2017} developed a partial regularity theory for the surface growth model, an analogue of the celebrated \cite{CKN} theorem for the NSE. The central result of this theory is that there exists $\varepsilon_0 >0$ such any ``suitable'' weak solution of the surface growth model (see Definition \ref{def_of_weak_sol}) satisfying
\eqnb\label{local_reg_cond_SGM}
\frac{1}{r^2} \int_{Q(z,r)} |u_x|^3 < \varepsilon_0
\eqne
is H\"older continuous (with any exponent $\alpha \in (0,1)$) in $Q(z,r/2)$ (see Theorem 4.5 in \cite{Ozanski_Robinson_2017}). Here $Q(z,r)$ denotes a space-time cylinder of radius $r$ centred at $z$ and ``suitable'' refers to weak solutions satisfying a local form of an energy inequality, see \cite{Ozanski_Robinson_2017} for precise definitions. 
A straightforward consequence of this result is that one can estimate the dimension of the singular set
\eqnb\label{singset}\begin{split}
S &\coloneqq \{(x,t)\in\T\times[0,\infty):\ u\mbox{ is not space-time H\"older continuous} \\
&\hspace{4.5cm}\qquad\qquad\qquad\mbox{in any neighbourhood of }(x,t)\}.
\end{split}\eqne
Namely one obtains $d_B(S)\leq 7/6$, where $d_B$ denotes the box-counting dimension. Another consequence of the partial regularity theory is that $d_H(S)\leq 1$, where $d_H$ denotes the Hausdorff dimension. 

Unfortunately it is not known whether the definition \eqref{singset} of the singular set is optimal; that is whether $u$ is smooth (locally) outside $S$.

Interestingly, it seems that the proof of the partial regularity theory cannot be obtained by the method of \cite{CKN}; instead \cite{Ozanski_Robinson_2017} used the approach of \cite{lin} and \cite{ladyzhenskaya_seregin}; the analysis there also required a novel tool, namely a nonlinear parabolic Poincar\'e inequality,
\eqnb\label{PPoinc}
\frac{1}{r^5}\int_{Q(z,r/2)}|u-u_{z,r/2}|^3\le C_{pp} \left( \Y (z,r) + \Y (z,r)^2 \right),
\eqne
for weak solutions of the surface growth model, where
\[
\Y (z,r) \coloneqq \frac{1}{r^2} \int_{Q(z,r)} |u_x|^3
\]
and $u_{z,r/2}$ denotes the mean of $u$ over $Q(z,r/2)$. 

In this article we study another property of weak solutions of the surface growth model, which is similar to the so-called local Serrin condition in the case of the Navier--Stokes equations. In the case of the NSE this condition reads: if $u$ is a weak solution on a space-time domain $U\times (t_1,t_2)$ such that
\eqnb\label{serrin_cond_NSE}
u \in L^{q'} ((t_1,t_2); L^q (U))\quad \text{ with } \frac{3}{q}+\frac{2}{q'} \leq 1
\eqne
then $u$ is smooth in the space variables on this domain. The condition is named after Serrin (1962 \& 1963)\nocite{serrin_62,serrin_63}, who was the first to study the property \eqref{serrin_cond_NSE} in the subcritical case (that is when the inequality in \eqref{serrin_cond_NSE} is sharp ``$<$''). The critical case (that is when $2/q'+3/q=1$) has been studied by \cite{fabes_jones_riviere_72} when $q\in (3,\infty)$, and \cite{struwe_88}, \cite{takahashi_90} when $q >3$. The most difficult case of $q'=\infty, q=3$ was resolved by Escauriaza, Seregin \& \v{S}ver\'{a}k (2003)\nocite{ESS_2003}. We refer the reader to an excellent presentation of the local Serrin condition in the NSE and further references in Chapter 13 of \cite{NSE_book}.

Here, we prove that a weak solution $u$ to the surface growth model on a space-time domain $U\times (t_1,t_2)$ is smooth on this domain provided that
\eqnb\label{serrin_cond_SGM}
u_x \in L^{q'} ((t_1,t_2); L^q (U))\quad \text{ where }\quad \frac{1}{q}+\frac{4}{q'} \leq 1
\eqne
(and $q,q'\in (1, \infty ]$), see Theorem \ref{thm_subcritical_case} for the subcritical case (i.e. $1/q+4/q'<1$) and Theorem \ref{thm_critical_case} for the critical case (i.e. $1/q+4/q'=1$). Remarkably, under this condition we obtain smoothness in both space and time (rather than smoothness in space only, as in the case of the NSE). Note that from all possible choices of the exponents $q,q'$ satisfying the relation in \eqref{serrin_cond_SGM} we only exclude the choice $q=1$, $q'=\infty$ (see the conclusion for a discussion regarding this case). Moreover observe that we state the Serrin condition \eqref{serrin_cond_SGM} in terms of $u_x$ (rather than in terms of $u$, which is the case in the NSE). This is related to the fact that the lowest spatial derivative of $u$ involved in the nonlinear term in the surface growth model \eqref{SGM} is $u_x$ (rather than the $0$-th derivative, which is the case in the NSE). Note that the condition for local regularity from \cite{Ozanski_Robinson_2017} is also stated in terms of $u_x$ (see \eqref{local_reg_cond_SGM}). Therefore, our main result is a next step towards understanding the remarkable similarity between the SGM and the NSE.

Interestingly, local H\"older continuity of $u$ follows trivially from the subcritical Serrin condition by a use of an extended version of the parabolic Poincar\'e inequality. Indeed the parabolic Poincar\'e inequality \eqref{PPoinc} can be extended to
\eqnb\label{PPoinc_generalised}
\frac{1}{r^5}\int_{Q(z_0,r/2)}|u-u_{z_0,r/2}|^p\leq C \left( (r^\varepsilon M )^p + (r^\varepsilon M )^{2p} \right),
\eqne
where
\[
M\coloneqq \left[ \int_{t_1}^{t_2} \left( \int_U |u_x(t)|^p \right)^{p'/p} \d t \right]^{1/p'}
\]
and $p, p'\in [2,\infty )$ are such that $1/p+4/p'=1-\varepsilon$. Such an extension can be proved in the same way as \eqref{PPoinc} (see Theorem 3.1 in \cite{Ozanski_Robinson_2017}). Thus if the condition \eqref{serrin_cond_SGM} is satisfied with $1/q+4/q'=1-\varepsilon$, then an application of H\"older's inequality to \eqref{PPoinc_generalised} together with the Campanato lemma give $\varepsilon$-H\"older continuity of $u$ in $U\times (t_1,t_2)$.

What is more, in the same circumstances (and also allowing the case $1/q+4/q' =1$, $q'<\infty$) the partial regularity theory gives $\alpha$-H\"older continuity for any $\alpha \in (0,1)$ if $q \geq 3$. Indeed, H\"older's inequality gives
\[
\frac{1}{r^2} \int_{Q(z,r)} |u_x|^3 \leq \| u_x \|^3_{L^{q',q} (Q(z,r)) } r^{3(1-1/q-4/q')},
\]
which is less than $\varepsilon_0$ if the cylinder $Q(z,r)$ is small enough (provided $q'<\infty$, which can be guaranteed in any case, cf. the beginning of the proof of Theorem \ref{thm_subcritical_case}). Thus the condition \eqref{local_reg_cond_SGM} for partial regularity can be guaranteed for each sufficiently small cylinder $Q(z,r)$, and the claim follows. Therefore the main result of this article is interesting, since it shows that the local Serrin condition \eqref{serrin_cond_SGM} gives $C^\infty$ smoothness, rather than merely $\alpha$-H\"older continuity for any $\alpha \in (0,1)$. This also suggests that the definition of the singular set \eqref{singset} is optimal.
 
The proof of our result begins with the approach of \cite{takahashi_90}, adapted to the one-dimensional fourth order setting. However, in order to show boundedness of higher derivatives in space, we use fractional order Sobolev spaces to obtain, roughly speaking, a half of a derivative at a time (see step 2' of the proof of Theorem \ref{thm_subcritical_case}). This seems to be a novel approach in this context. Moreover, our setting requires a uniqueness theorem for weak solutions of the biharmonic heat equation $w_t+w_{xxxx}=0$. Since we are dealing with a fourth order equation, we cannot adapt any uniqueness theorem for the heat equation which uses the maximum principle. We can, however, adapt a uniqueness theorem for the heat equation which is based on the density of the image of the heat operator $\p_t- \Delta$ in $L^p$. A uniqueness theorem of this kind is presented in Section 4.4.2 of \cite{giga_saal} and we prove an appropriate adaptation of it in Theorem \ref{thm_uniqueness_sol_biharm_heat} below. 

The structure of the article is the following. In Section \ref{sec_prelims} we discuss the preliminary concepts including fractional Sobolev spaces (Section \ref{sec_fractional_sobolev}), the biharmonic heat kernel (Section \ref{sec_biharm_heat_kernel}), the uniqueness theorem (Section \ref{sec_uniqueness_thm1}) and the concept of local weak solutions to the surface growth model (Section \ref{sec_weak_sols_to_SGM}). We then proceed to the proof of the local Serrin condition for the surface growth model in Section \ref{sec_local_serrin_cond}. There, we discuss a certain representation formula for $u_x$ and we prove regularity, first for the subcritical case $1/q+4/q'<1$ in Section \ref{sec_subcritical} and then for the critical case $1/q+4/q'=1$ with $q'<\infty$ in Section \ref{sec_critical}.

\section{Preliminaries}\label{sec_prelims}
We will write $\partial_t$ for the derivative in time and $\partial_x$ for the derivative in $x$. 
We denote by $v_\varepsilon$ the mollification of a function $v:\RR^2 \to \RR $ in both space in time. We say that a function satisfies a partial differential equation in an open set if it satisfies the distributional form of the equation. We denote a space-time cylinder by $Q$, that is $Q=B\times I$ for some intervals $B,I\subset \RR$. We use the shorthand notation $\| \cdot \|_p \coloneqq   \|\cdot \|_{L^p (\RR )}$. Given $Q$ and $p,p'\in [1,\infty ]$ we let
\[
L^{p',p} (Q) = L^{p'} (I; L^p(B)) \coloneqq \{ f\in Q\to \RR \, : \, f \text{ is measurable and } \| f \|_{L^{p',p}(Q) }< \infty \},
\]
where
\[
\|f \|_{L^{p',p} (Q )}^{p'} \coloneqq \int_I  \| f(t) \|_{L^p (B)}^{p'} \d t 
\]
for $p'<\infty$ and 
\[
\|f \|_{L^{\infty,p} (Q )} \coloneqq \mathrm{esssup}_{t\in I}  \| f(t) \|_{L^p (B)}. 
\]
Given $T>0$ we use the shorthand notation $L^{p',p}\coloneqq L^{p',p}(\RR \times (0,T))$ and $\| \cdot \|_{p',p} \coloneqq \| \cdot \|_{L^{p',p}(\RR \times (0,T))}$.
This should not be confused with the weak-$L^p$ spaces, which we do not use in this article, except for the brief encounter in the proof of Theorem \ref{thm_est_nonhomo_BHE} below. This also should not be confused with some literature on the NSE where the order of the indices $p',p$ is switched; for example, the $L^{3,\infty }$ condition from \cite{ESS_2003} corresponds to $L^{\infty , 3}$ in our notation.
 
\subsection{Fractional Sobolev spaces}\label{sec_fractional_sobolev}

We denote by $\widehat{f}$ the Fourier transform (in the $x$ variable) of $f$, that is
\[
\widehat{f} (\xi ) \coloneqq \int_{\RR} f(x) \e^{-2\pi i x\xi} \d x, \qquad \text{ for }\xi \in \RR.
\]
 We will only consider Fourier transforms of functions that are bounded and have compact support. For such functions and any $s>0$ we denote by $\Lambda^s f$ the function with Fourier transform
\[
\widehat{\Lambda^{s} f} \coloneqq |\xi |^s \widehat{f}(\xi).
\]
Let 
\[
H^s \coloneqq \{ f\in L^2 (\RR ) \, :\, \Lambda^s f \in L^2 (\RR ) \}
\]
denote the fractional Sobolev space of order $s$ with the norm defined by
\[
\| f \|^2_{H^s} \coloneqq \int_\RR \left(1+ |\xi |^{2s} \right) |\widehat{f}(\xi)|^2\d \xi .
\] 
Observe that $\Lambda f\in L^2 $ if and only if $f_x\in L^2$ with
\eqnb\label{lambda_and_derivative}
2\pi \| \Lambda f \|_2 = \| f_x \|_2.
\eqne
Recall the Sobolev--Slobodeckij characterisation $H^s = W^{s,2}(\RR)$ for $s\in (0,1)$ with
\eqnb\label{sobolev_slobodeckij_characterisation}
\| f \|_{H^s} \simeq \| f \|_{W^{s,2}(\RR)},
\eqne
where ``$\simeq $'' denotes the equivalence of norms, $W^{s,2} (\Omega )\coloneqq \{ f\, : \, \| f \|_{W^{s,2}(\Omega )} <\infty \}$ and
\eqnb\label{norm_on_W^{s,2}}
\| f \|^2_{W^{s,2}(\Omega )} \coloneqq \left( \int_\Omega |f(x)|^2\d x + \int_\Omega \int_\Omega \frac{|f(x)-f(y)|^2}{|x-y|^{1+2s}} \d y\, \d x \right)
\eqne
for any open $\Omega \subset \RR$.
(For the proof of this characterisation see, for example, Proposition 3.4 in \cite{hitchhikers_guide}.)
\subsection{Biharmonic heat kernel}\label{sec_biharm_heat_kernel}
Let
\eqnb\label{BHK}
\Phi (x,t) \coloneqq \frac{c}{t^{1/4}} K(|x|/t^{1/4})
\eqne
where
\[
K(r) \coloneqq \int_0^\infty \e^{-s^4}\cos(r\,s)\,\d s
\]
and $c>0$ is such that $\| \Phi (t) \|_1 =1$. Then $\Phi$ is the biharmonic heat kernel (that is $w\coloneqq \Phi (t) \ast f$ satisfies the biharmonic heat equation $w_t+w_{xxxx}=0$ with initial condition $w(0)=f$, see \cite{ferrero_gazzola_grunau}). 
Note that, since $K(|x |)$ is smooth in $x\in \RR$ and has exponential decay as $|x|\to \infty$, we obtain
\eqnb\label{BHK_Lp_norm_of_derivatives}
\| \p_x^{(k)} \Phi (t) \|_p \leq C t^{-(k+1-1/p)/4}, \qquad k\geq 0.
\eqne
Moreover
\eqnb\label{BHK_Linty_of_FT_of_Phi}
\left| |\xi |^s \widehat{\Phi} (\xi ,t) \right| \leq C t^{-s/4}, \qquad s>0, t\in (0,T),
\eqne
for $\xi \in \RR$, $t>0$, $s\geq 0$, where $\widehat{\Phi }$ denotes the Fourier transform (with respect to $x$) of $\Phi$. The estimate \eqref{BHK_Linty_of_FT_of_Phi} (as well as \eqref{BHK_Lp_norm_of_derivatives}) follows directly from the formula \eqref{BHK} and from the smoothness of $K$. The estimate in \eqref{BHK_Lp_norm_of_derivatives} gives the following.
\begin{theorem}[Estimates for the convolution with the biharmonic heat equation]\label{thm_est_nonhomo_BHE}
If $f\in L^1_{\mathrm{loc}} (\RR \times [0,T ))$, $0\leq k \leq 3$ and 
\[
v(t)\coloneqq \int_0^t \Phi (t-s) \p_x^{(k)} f(s) \,\d s
\]
then
\[
\| v\|_{r',r} \leq C_{l,l',r,r'} \| f \|_{l',l},
\]
$l,l', r,r'$ satisfy $1\leq l \leq r \leq \infty$, $1\leq l' \leq r' \leq \infty$ and either 
\eqnb\label{exponents_sharp_<}
\frac{1}{l}+\frac{4}{l'} < \frac{1}{r}+\frac{4}{r'} + (4-k)
\eqne
or
\eqnb\label{exponents_not_sharp}
\frac{1}{l}+\frac{4}{l'} \leq  \frac{1}{r}+\frac{4}{r'} + (4-k)\quad \text{ and } 1<l'<r'< \infty .
\eqne
\end{theorem}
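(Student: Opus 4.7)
The plan is to (i) transfer the $k$ spatial derivatives from $f$ onto the biharmonic kernel by integration by parts in $x$, rewriting
\[
v(t) = \int_0^t \p_x^{(k)} \Phi(t-s) \ast f(s)\,\d s,
\]
where $\ast$ denotes spatial convolution; (ii) apply Young's convolution inequality in $x$ together with the kernel bound \eqref{BHK_Lp_norm_of_derivatives}; and (iii) reduce to a one-dimensional fractional integral in time and estimate it using either Young's inequality or the Hardy--Littlewood--Sobolev inequality. The integration by parts in step (i) is justified by the rapid decay of $\Phi$ and its derivatives together with the local integrability of $f$, via a standard approximation argument.

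For step (ii), I would choose $p \in [1,\infty]$ satisfying $1/p + 1/l = 1 + 1/r$, which is admissible precisely because $l \le r$. Young's inequality in the $x$-variable combined with \eqref{BHK_Lp_norm_of_derivatives} then yields
\[
\|v(t)\|_r \le \int_0^t \|\p_x^{(k)} \Phi(t-s)\|_p \, \|f(s)\|_l \, \d s \le C \int_0^t (t-s)^{-\alpha} \, \|f(s)\|_l \, \d s,
\]
where $\alpha \coloneqq (k + 1 - 1/p)/4 = (k + 1/l - 1/r)/4$. Setting $h(s) \coloneqq \|f(s)\|_l$, the task reduces to bounding the scalar fractional integral $g(t) \coloneqq \int_0^t (t-s)^{-\alpha} h(s)\,\d s$ by $C\|h\|_{L^{l'}(0,T)}$ in $L^{r'}(0,T)$.

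In the subcritical case \eqref{exponents_sharp_<}, which is equivalent to $\alpha < 1 + 1/r' - 1/l'$, I would pick $\sigma \in [1, 1/\alpha)$ with $1/\sigma = 1 + 1/r' - 1/l'$; since $\alpha \sigma < 1$, the kernel $s \mapsto s^{-\alpha}$ lies in $L^\sigma(0,T)$, and the standard Young's inequality in time (after zero extension, viewing $g$ as a one-sided convolution on $\RR$) gives the required bound. In the critical case \eqref{exponents_not_sharp}, the equality $\alpha = 1 + 1/r' - 1/l'$ rules out plain Young; instead I would invoke the Hardy--Littlewood--Sobolev inequality, noting that $s \mapsto s^{-\alpha}$ belongs to the weak Lebesgue space $L^{1/\alpha,\infty}(\RR_+)$ and that the assumption $1 < l' < r' < \infty$ in \eqref{exponents_not_sharp} is exactly what HLS requires. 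The main obstacle is precisely this critical endpoint: the weak-type bound on the fractional kernel and the strict inequalities on $l'$, $r'$ are both essential, which is why they appear in the hypothesis.
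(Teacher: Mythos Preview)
Your proposal is correct and follows essentially the same route as the paper: transfer the $k$ derivatives onto $\Phi$, apply Young's inequality in $x$ with the kernel bound \eqref{BHK_Lp_norm_of_derivatives}, and then estimate the resulting time convolution by the standard Young inequality in the strict case \eqref{exponents_sharp_<} and by the weak-type Young/Hardy--Littlewood--Sobolev inequality in the critical case \eqref{exponents_not_sharp}. Your exponents $p$ and $\sigma$ are exactly the paper's $a$ and $a'$, and your condition $\alpha\sigma<1$ (resp.\ $\alpha\sigma=1$) unwinds to precisely \eqref{exponents_sharp_<} (resp.\ the equality in \eqref{exponents_not_sharp}); the paper phrases the critical step as ``Young's inequality for weak spaces'' rather than HLS, but with the kernel $s\mapsto s^{-\alpha}\in L^{1/\alpha,\infty}$ these are the same statement.
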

Note that the cases of $l',r'\in \{ 1,\infty\}$ and $l'=r'$ are allowed in \eqref{exponents_sharp_<}, but not in \eqref{exponents_not_sharp}.
\begin{proof}
We first focus on the case \eqref{exponents_sharp_<}. We have
\[
v(t)= \int_0^t \Phi (t-s) \ast \p_x^{(k)} f(s) \, \d s = (-1)^k \int_0^t \p_x^{(k)}  \Phi (t-s) \ast f(s) \, \d s.
\]
Thus, Young's inequality gives
\[
\| v(t) \|_r \leq \int_0^t \| \p_x^{(k)}  \Phi (t-s)\|_a \| f(s)\|_l \, \d s\leq C \int_0^t (t-s)^{-(k+1-1/a)/4}\| f(s)\|_l \, \d s ,
\]
where $1/r=1/a+1/l-1$. Hence
\eqnb\label{2nd_use_of_young}
\| v(t) \|_{r',r} \leq C \| f \|_{l',l} \| \, |s |^{-(k+1-1/a)/4} \|_{a'}, 
\eqne
where $1/{r'}=1/{a'}+1/{l'}-1$. The last norm is finite if and only if $a'(k+1-1/a)/4<1$, which is equivalent to \eqref{exponents_sharp_<}.

As for the case \eqref{exponents_not_sharp}, we need to use the Young inequality for weak spaces,
\eqnb\label{weak_young_inequality}
\| f\ast g\|_{r'} \leq \| f \|_{l'} \| g \|_{\widetilde{L}^{a'}},
\eqne
where $r',l',a'\in (1,\infty )$ are such that $1/r'=1/a'+1/l'-1$,
\[
\| g \|_{\widetilde{L}^{a'}}\coloneqq \inf \{ C>0 \, :\, d_g(\alpha ) \leq C^{a'}/\alpha^{a'} \quad \text{ for all }\alpha >0 \}
\]
denotes the norm of the weak-$L^{a'}$ space, and
\[
d_g (\alpha ) \coloneqq |\{ |g|>\alpha \} |, \quad \text{ for }\alpha >0
\]
denotes the distribution function of $g$. We refer the reader to Theorem 11.3 in \cite{McCormick_Robinson_Rodrigo_2013} for a proof of \eqref{weak_young_inequality}.

The point of using the weak form of Young's inequality \eqref{weak_young_inequality} is that $t^{-1/a'}$ is an element of the weak-$L^{a'}(0,1)$ space (but not of $L^{a'}(0,1)$), and so using it in the step leading to \eqref{2nd_use_of_young} gives
\[
\| v(t) \|_{r',r} \leq C \| f \|_{l',l} \| \, |s |^{-(k+1-1/a)/4} \|_{\widetilde{L}^{a'}}, 
\]
where $1/r'=1/a'+1/l'-1$ (note $a',r',l'\in(1,\infty)$ by \eqref{exponents_not_sharp}).
Thus the claim follows since the last norm is finite if and only if $a'(k+1-1/a)/4\leq l$, which is guaranteed by \eqref{exponents_not_sharp}.
\end{proof}
We will often consider a function $v:\RR \times [0,T)\to \RR$ of the form
\eqnb\label{conv_with_BHK_derivatives}
v(t) = \int_0^t \Phi (t-s) \ast g(s) \, \d s,
\eqne
where
\eqnb\label{form_of_g}
g=\sum_{k=0}^3 \phi_k \p_x^{k} f_k ,
\eqne
$f_k\in L^{l'}L^l$ (for some $l',l\geq 1$), $\phi_k \in C_0^\infty (Q)$ for some fixed $Q\Subset \RR\times (0,T)$.
Since \eqref{conv_with_BHK_derivatives} is not necessarily well-defined for such $f_k$'s (i.e. their derivatives might not exist), we will understand \eqref{conv_with_BHK_derivatives} as if all derivatives are transferred onto $\Phi$ and $\phi_k$'s (via integration by parts). Namely \eqref{conv_with_BHK_derivatives} means that
\[
v(t) = \sum_{k} \sum_{j=0}^k (-1)^k  {{k}\choose{j}} \int_0^t \p_{x}^{(j)}\Phi (t-s)\ast \left[ f_k(s) \p_x^{(k-j)} \phi_k (s)\right] \d s.
\]
We now formulate a corollary of Theorem \ref{thm_est_nonhomo_BHE} which is ``tailor-made'' for $v$'s of such form.
\begin{corollary}\label{cor_est_nonhomo_BHE_in_cylinder}
Let $v$ be given by \eqref{conv_with_BHK_derivatives} with $f_k\in L^{l_k',l_k}(Q)$, where $l_k',l_k$ satisfy \eqref{exponents_sharp_<} or \eqref{exponents_not_sharp} for some $r,r'$. Then 
\[
\| v\|_{r',r} \leq \sum_{k} C_k \| f_k \|_{L^{l_k',l_k}(Q)}.
\]
\end{corollary}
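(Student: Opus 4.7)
The plan is to unpack the definition of $v$ provided just before the statement and reduce the problem to a finite sum of terms each of which is of precisely the shape handled by Theorem \ref{thm_est_nonhomo_BHE}. Writing
\[
v(t) = \sum_{k=0}^{3} \sum_{j=0}^{k} (-1)^{k}\binom{k}{j} \int_0^t \p_x^{(j)}\Phi(t-s) \ast \bigl[ f_k(s)\, \p_x^{(k-j)}\phi_k(s) \bigr]\,\d s,
\]
the triangle inequality in $L^{r',r}$ lets me consider each pair $(k,j)$ separately. For each such pair, smoothness and compact support of $\phi_k$ in $Q$ give that $g_{k,j}(s) \coloneqq f_k(s) \p_x^{(k-j)}\phi_k(s)$, extended by zero, is measurable on $\RR\times(0,T)$ with
\[
\|g_{k,j}\|_{L^{l_k',l_k}(\RR\times(0,T))} \le \|\p_x^{(k-j)}\phi_k\|_\infty\, \|f_k\|_{L^{l_k',l_k}(Q)}.
\]

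Next I would invoke Theorem \ref{thm_est_nonhomo_BHE}, noting that its proof really bounds the quantity $\int_0^t \p_x^{(j)}\Phi(t-s)\ast g(s)\,\d s$ directly (the passage from $\int_0^t \Phi(t-s)\p_x^{(k)}f(s)\,\d s$ in the statement to $(-1)^k\int_0^t \p_x^{(k)}\Phi(t-s)\ast f(s)\,\d s$ is the first line of that proof, so the derivative is effectively on $\Phi$ throughout). Applied with source exponents $l_k, l_k'$, target exponents $r, r'$, and derivative order $j$ in place of $k$, this would yield
\[
\left\| \int_0^t \p_x^{(j)}\Phi(t-s)\ast g_{k,j}(s)\,\d s \right\|_{r',r} \le C_{k,j}\, \|f_k\|_{L^{l_k',l_k}(Q)}.
\]

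The one hypothesis to verify, and the only genuine point in the argument, is that the admissibility conditions \eqref{exponents_sharp_<}/\eqref{exponents_not_sharp} for $(l_k',l_k,r',r)$ with derivative order $k$ still hold when $k$ is replaced by $j$ for each $j\in\{0,1,\dots,k\}$. This is immediate since $4-j\ge 4-k$, so whichever of \eqref{exponents_sharp_<} or \eqref{exponents_not_sharp} holds at order $k$ holds \emph{a fortiori} at order $j$ (with the same $r,r',l_k,l_k'$). Summing the finitely many contributions over $k\in\{0,\dots,3\}$ and $j\in\{0,\dots,k\}$ completes the bound. I expect no real obstacle here; the entire work has been done inside Theorem \ref{thm_est_nonhomo_BHE}, and this corollary is essentially a bookkeeping statement that packages it for the way the convolution is actually used later.
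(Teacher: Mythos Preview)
Your proposal is correct and is exactly the argument the paper has in mind: the corollary is stated without proof, as an immediate repackaging of Theorem \ref{thm_est_nonhomo_BHE} via the expanded definition of \eqref{conv_with_BHK_derivatives}, and your unpacking (triangle inequality over $k,j$, bound on $\phi_k$-derivatives, monotonicity $4-j\ge 4-k$ to retain the exponent condition) is the natural route. One cosmetic remark: when \eqref{exponents_not_sharp} holds with equality at order $k$ and $j<k$, what you actually get at order $j$ is the strict inequality \eqref{exponents_sharp_<} rather than \eqref{exponents_not_sharp} again, but this only strengthens your claim that Theorem \ref{thm_est_nonhomo_BHE} applies.
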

\begin{corollary}[Representation formula for \eqref{conv_with_BHK_derivatives}]\label{cor_repr_of_sol}
Suppose that $r',r\in [1,\infty ]$ and $w\in L^{r',r} $ is a distributional solution of
\[
w_t+w_{xxxx}= g\quad \text{ in } \RR \times (0,T)
\]
where $k\leq 3$, $g$ is given by \eqref{form_of_g} and each $f_k$ belongs to $L^{l_k',l_k}(Q)$ with $l_k',l_k$ satisfying \eqref{exponents_sharp_<} or \eqref{exponents_not_sharp}. Suppose further that $w=0$ in $\RR\times (0,t_0)$ for some $t_0 >0$. Then $w$ is given by the convolution with the biharmonic heat kernel, namely it satisfies the representation \eqref{conv_with_BHK_derivatives}.
\end{corollary}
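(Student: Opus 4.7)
The plan is to construct the candidate $v$ on the right-hand side of \eqref{conv_with_BHK_derivatives} (interpreted via the integration-by-parts convention explained just before the corollary), verify that $v$ enjoys the same structural properties as $w$ --- namely, $v\in L^{r',r}$, $v_t+v_{xxxx}=g$ distributionally, and $v$ vanishes near $t=0$ --- and then apply Theorem \ref{thm_uniqueness_sol_biharm_heat} to the difference $w-v$.

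First, the membership $v\in L^{r',r}$ is immediate from Corollary \ref{cor_est_nonhomo_BHE_in_cylinder}, which bounds each summand of \eqref{conv_with_BHK_derivatives} by $\|f_k\|_{L^{l_k',l_k}(Q)}$. Second, since each $\phi_k$ has support in $Q\Subset \RR \times (0,T)$, the number $t_*\coloneqq \inf\{s\,:\,(x,s)\in Q\text{ for some }x\}$ is strictly positive, and for $t<t_*$ every integrand in \eqref{conv_with_BHK_derivatives} vanishes identically, so $v=0$ on $\RR \times (0,t_*)$.

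Third, and this is the main technical point, I would verify that $v_t + v_{xxxx} = g$ in $\mathcal{D}'(\RR \times (0,T))$ by a density argument. Pick sequences $f_k^{(n)}\in C_c^\infty(Q)$ with $f_k^{(n)}\to f_k$ in $L^{l_k',l_k}(Q)$, define $v^{(n)}$ by substituting $f_k^{(n)}$ for $f_k$ in \eqref{conv_with_BHK_derivatives}, and set $g^{(n)}\coloneqq \sum_k \phi_k \p_x^k f_k^{(n)}$. For such smooth, compactly supported data, a standard Duhamel computation (using $\Phi_t + \Phi_{xxxx}=0$ for $t>0$ together with $\Phi(\cdot,t)\ast \psi \to \psi$ as $t\to 0^+$) shows that $v^{(n)}$ solves $v^{(n)}_t + v^{(n)}_{xxxx} = g^{(n)}$ classically. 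Applying Corollary \ref{cor_est_nonhomo_BHE_in_cylinder} to $f_k - f_k^{(n)}$ yields $v^{(n)}\to v$ in $L^{r',r}$, while plainly $g^{(n)}\to g$ in $\mathcal{D}'$; passing to the limit in the distributional identity gives the claim.

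Finally, the difference $\tilde{w}\coloneqq w-v$ lies in $L^{r',r}$, solves $\tilde{w}_t + \tilde{w}_{xxxx}=0$ in $\RR\times(0,T)$ distributionally, and vanishes on $\RR \times (0,\min(t_0,t_*))$; Theorem \ref{thm_uniqueness_sol_biharm_heat} then forces $\tilde{w}\equiv 0$, i.e.\ $w=v$, which is the desired representation. The main obstacle is the third step: the passage to the limit must respect all the integrations by parts hidden in \eqref{conv_with_BHK_derivatives}, and one must track carefully the singularity of $\p_x^j \Phi(t-s)$ at $s=t$ when localising against test functions. Once this bookkeeping is carried out, the proof is merely a routine assembly of the preceding pieces together with the uniqueness theorem.
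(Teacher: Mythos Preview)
Your proposal is correct and follows essentially the same route as the paper: define the Duhamel integral $v$ (the paper calls it $\widetilde w$), show $v\in L^{r',r}$ via Corollary \ref{cor_est_nonhomo_BHE_in_cylinder}, note that $v$ solves $v_t+v_{xxxx}=g$, and conclude $w=v$ from Theorem \ref{thm_uniqueness_sol_biharm_heat}. The paper's proof is three lines and simply asserts that $\widetilde w$ satisfies the equation and that uniqueness applies; you have supplied the two details it suppresses --- the vanishing of $v$ for small times (needed so that test functions in $C_0^\infty(\RR\times[0,T))$ touching $t=0$ are admissible in the uniqueness theorem) and a density argument for the distributional identity --- so your write-up is, if anything, more complete. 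One small remark on the density step: if some $l_k$ or $l_k'$ equals $\infty$ you cannot approximate in that norm, but since $Q$ is bounded and the strict inequality \eqref{exponents_sharp_<} is stable under lowering the exponents slightly, you may first pass to finite exponents without loss.
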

\begin{proof}
Let 
\[
\widetilde{w} (t) \coloneqq \int_0^t \Phi (t-s) \ast g(s) \, \d s.
\]
Since $k\leq 3$ and $f_k \in L^{l_k'}L^{l_k}(Q)$, Corollary \ref{cor_est_nonhomo_BHE_in_cylinder} gives $\widetilde{w} \in L^{r',r}$. Moreover $\widetilde{w}$ satisfies $\widetilde{w}_t+\widetilde{w}_{xxxx}= g$, similarly as $w$. Thus $\widetilde{w}=w$ due to the uniqueness of solutions to the biharmonic heat equation, see the theorem below.
\end{proof}
\subsection{Uniqueness of solutions to homogeneous biharmonic heat equation}\label{sec_uniqueness_thm1}
\begin{theorem}[Uniqueness of solutions to homogeneous biharmonic heat equation]\label{thm_uniqueness_sol_biharm_heat}
Suppose that $q,q'\in [1,\infty ]$ and $v\in L^{q', q}$ is a distributional solution to the homogeneous biharmonic heat equation, that is
\eqnb\label{biharm_HE_uniqueness_equation}
\int_0^T \int_\RR v (\phi_t - \phi_{xxxx}) =0 \qquad \text{ for all }\phi \in C_0^\infty (\RR\times [0,T)).
\eqne
Then $v=0$.
\end{theorem}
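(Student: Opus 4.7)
The plan is to show that $\int v\psi =0$ for every $\psi\in C_0^\infty(\RR\times (0,T))$, which forces $v=0$ almost everywhere. I would construct a suitable test function by solving the backward adjoint problem via the biharmonic heat kernel running backwards in time:
\[
\phi(x,t)\coloneqq \int_t^T \Phi(\cdot ,s-t)\ast \psi(\cdot ,s)\,\d s.
\]
Using $\Phi_t=-\Phi_{xxxx}$ together with $\Phi(\cdot ,\tau )\to \delta$ as $\tau \to 0^+$, a direct differentiation under the integral gives $\phi_t-\phi_{xxxx}=-\psi$ and $\phi(\cdot ,T)=0$. Choosing $\delta >0$ so that $\supp \psi \subset \RR \times [\delta ,T-\delta ]$, the function $\phi$ is smooth in both variables and vanishes for $t>T-\delta$, so it has compact support in time within $[0,T)$.

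The main obstacle is that $\phi$ need not have compact support in $x$ and hence is not directly admissible in \eqref{biharm_HE_uniqueness_equation}; this must be circumvented by establishing sufficient spatial decay and then truncating. Writing $\partial_x^{(j)}\phi =\int_t^T \Phi(\cdot ,s-t)\ast \partial_x^{(j)}\psi(\cdot ,s)\,\d s$ (transferring derivatives onto $\psi$), the exponential decay of $K$ in \eqref{BHK} together with the integrability of $\tau^{-1/4}$ on $[0,T]$ yields that $\partial_x^{(j)}\phi (x,t)$ decays exponentially in $|x|$ uniformly for $t\in [0,T]$, for every $j\geq 0$. In particular $\partial_x^{(j)}\phi (\cdot ,t)\in L^p(\RR )$ for every $p\in [1,\infty ]$, uniformly in $t$.

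I then introduce $\chi_R\in C_0^\infty (\RR )$ with $\chi_R=1$ on $\{|x|\leq R\}$, $\chi_R=0$ on $\{|x|\geq 2R\}$, and $|\chi_R^{(k)}|\leq CR^{-k}$, so that $\phi_R\coloneqq \chi_R\phi \in C_0^\infty (\RR\times [0,T))$ is admissible. Inserting $\phi_R$ into \eqref{biharm_HE_uniqueness_equation} and expanding via the Leibniz rule yields
\[
-\int v\chi_R\psi \;=\; \int v\chi_R(\phi_t-\phi_{xxxx}) \;=\; \int v\sum_{k=1}^{4}\binom{4}{k}\chi_R^{(k)}\partial_x^{(4-k)}\phi .
\]
As $R\to \infty$ the left-hand side tends to $-\int v\psi$ by dominated convergence (since $v\in L^{q',q}$ and $\psi$ has compact support). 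Each summand on the right is supported in $\{R\leq |x|\leq 2R\}$ and bounded pointwise by $CR^{-k}|\partial_x^{(4-k)}\phi |$; H\"older's inequality in the dual pairing between $L^{q',q}$ and $L^{q'/(q'-1),\,q/(q-1)}$ (with the obvious one-sided substitutes when $q$ or $q'\in \{1,\infty \}$), combined with the rapid spatial decay of $\partial_x^{(4-k)}\phi$, then drives each summand to $0$. Passing to the limit gives $\int v\psi =0$ for every such $\psi$, whence $v=0$ almost everywhere.
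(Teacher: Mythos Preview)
Your proof is correct and follows essentially the same route as the paper's: construct the backward adjoint solution $\phi$ via convolution with the biharmonic heat kernel, observe that $\phi$ vanishes near $t=T$ but lacks compact spatial support, then truncate in space with a cutoff $\chi_R$ and show the cross terms from the Leibniz expansion vanish as $R\to\infty$. The only organisational difference is that the paper first abstracts the truncation step into the statement that \eqref{biharm_HE_uniqueness_equation} extends to all $\phi$ satisfying \eqref{wider_family_of_phi's} (proved by exactly the same dominated-convergence argument you sketch), and then applies this to the kernel-constructed $\phi$; you instead carry out the cutoff argument directly on that particular $\phi$ and invoke the stronger pointwise exponential decay of $K$ rather than merely the $L^p$ bounds \eqref{BHK_Lp_norm_of_derivatives}. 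Both are valid and neither is materially simpler.
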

\begin{proof}
We modify the argument from Section 4.4.2 of \cite{giga_saal}. We focus on the case $T<\infty$ (the case $T=\infty$ follows trivially by applying the result for all $T>0$). 

We first observe that the assumption $v\in L^{q',q}$ implies that \eqref{biharm_HE_uniqueness_equation} holds also for all $\phi \in C^\infty (\RR \times [0,T) )$ such that
\eqnb\label{wider_family_of_phi's}
\begin{cases}
 \p_t^k\p_x^m \phi \in L^{p',p} &\text{for every }k,m\geq 0, \\
\mathrm{supp }\, \phi \subset \RR \times [0,T' ) &\text{ for some } T' <T ,
\end{cases} 
\eqne
where $1/p+1/q=1$, $1/{p'}+1/{q'}=1$. Indeed, given such $\phi$ one can consider 
\[\phi_j (x,t)  \coloneqq \theta_j (x) \phi(x,t),\]
where $\theta_j (x) \coloneqq \theta (x/j)$ and $\theta \in C^\infty (\RR ; [0,1])$ is any function such that $\theta (\tau ) = 1$ for $|\tau |\leq 1$ and $\theta (\tau ) = 0 $ for $\tau \geq 2$. Then $\phi_j$ can be used as a test function in \eqref{biharm_HE_uniqueness_equation} and a simple use of the Dominated Convergence Theorem together with the properties \eqref{wider_family_of_phi's} and the assumption $v\in L^{q',q}$ proves the claim.

We will show that
\[
\int_0^T \int v \Psi   = 0
\]
for all $\Psi \in C_0^\infty (\RR \times (0,T) )$. The claim of the theorem then follows from the fundamental lemma of calculus of variations.

Given $\Psi \in C_0^\infty (\RR^3 \times (0,T) )$ let $T'<T$ be such that $\Psi(t) \equiv 0 $ for $t\geq T'$. Extend $\Psi$ by zero for $t\leq 0 $ and $t\geq T$. Let 
\[
\phi(x,t) \coloneqq - \int_0^{T-t} \Phi (T-t-s) \Psi (T-s)\, \d s,
\]
where $\Phi$ denotes the biharmonic heat kernel (see \eqref{BHK}).
Then $\phi$ solves the biharmonic heat equation backwards from $T$ with right-hand side $\Psi$, that is
\begin{equation}\label{phi_sol_of_backwards_heat}
\begin{cases}
\phi_t - \phi_{xxxx} = \Psi \qquad  \text{ in } \RR \times (-\infty,T),\\
\phi(T) =0.
\end{cases}
\end{equation}
In fact, we have $\phi(t)=0$ for $t\in [T',T]$. By the biharmonic heat estimates (see \eqref{BHK_Lp_norm_of_derivatives}) we see that $\phi$ satisfies \eqref{wider_family_of_phi's}. 
Thus \eqref{biharm_HE_uniqueness_equation} gives
\[
0=\int_0^T \int v (\phi_t - \phi_{xxxx} ) = \int_0^T \int v \Psi,
\]
as required.
\end{proof}
\subsection{Weak solutions of the surface growth model}\label{sec_weak_sols_to_SGM}
Here we define the notion of a weak solution to the surface growth model. Since the local Serrin condition is concerned with behaviour of weak solutions in bounded cylinders in space-time, we focus only on the notion of a local solution (in space-time; rather than a solution to the initial value problem).
\begin{definition}[Weak solution of the SGM]\label{def_of_weak_sol}
We say that $u\in L^{\infty ,2} (Q)$ is a \emph{weak solution} of the surface growth model on a cylinder $Q$ if $u_{xx} \in L^{2,2} (Q)$ and
\[
 \int_Q \left( u \, \phi_t - u_{xx} \phi_{xx} - u_x^2 \phi_{xx} \right) =0
\]
for all $\phi \in C_0^\infty (Q)$. 
\end{definition}
In what follows we will assume that $u$ is a weak solution of the SGM on a given $Q$. Note that any weak solution $u$ on a cylinder $Q$ satisfies 
\eqnb\label{reg_of_any_weak_sol}
 u_x \in L^{10/3,10/3} (Q) \eqne
(which can be shown using Sobolev interpolation; see (2.5) in \cite{Ozanski_Robinson_2017} for details), and so in particular the integral in the equation above is well-defined. Moreover
\eqnb\label{reg_for_q_between_1_2}
 u_x \in L^{16/3,2} (Q), \eqne
 given the main assumption of this article  (i.e. \eqref{serrin_cond_SGM}) is satisfied with $q\in (1,2]$, which follows by a simple application of Lebesgue interpolation (in space and then in time) between \eqref{reg_of_any_weak_sol} and \eqref{serrin_cond_SGM}. We will need \eqref{reg_for_q_between_1_2} in order to circumvent a certain technical issue in the proof of the main result when $q\in (1,2]$ (see the comments following \eqref{cor_applied_to_w} for details).

\section{Proof of the main result}\label{sec_local_serrin_cond}
Here we show that if $Q\Subset \RR \times (0,T)$ and $u_x \in L^{q',q} (Q)$ for any $q,q'\in (1,\infty ] $ such that either
\eqnb\label{exponents_Serrin}
\frac{1}{q}+\frac{4}{q'} < 1 \quad \text{ or } \quad 
\frac{1}{q}+\frac{4}{q'} = 1
\eqne
then $u\in C^\infty (Q)$.

The main idea of the proof is to note that the function 
\[ v\coloneqq u_x\]
satisfies the equation
\[
v_t+v_{xxxx}=-\p_{xxx} v^2
\]
in $Q$, and that one can apply the estimate from Theorem \ref{thm_est_nonhomo_BHE} to increase the regularity of $v$. In order to apply this strategy, we need to extend $v$ to $\RR\times (0,T)$ and explore the representation of such an extension by a formula similar to \eqref{conv_with_BHK_derivatives}. To be precise, given a cutoff function $\phi\in C_0^\infty (Q;[0,1])$, let
\[
w\coloneqq v\phi .
\]
Then $w$ satisfies
\eqnb\label{eq_for_w}
w_t + w_{xxxx} =  -(wv)_{xxx} +f_v \qquad \text{ in } \RR\times (0,T),\eqne
where
\[
f_v \coloneqq (v\,\phi_t +4v_{xxx} \phi_x + 6 v_{xx}\phi_{xx} +4v_x \phi_{xxx}+v\, \phi_{xxxx}) + (3\phi_x \p_{xx}v^2 + 3\phi_{xx}\p_x v^2 + \phi_{xxx}v^2 ),
\]
which we will write more concisely as
\eqnb\label{what_is_fv}
f_v=\sum_{m=0}^3  \phi_m \p_x^{m} v  +\sum_{k=0}^2 \psi_k \p_x^{k} (v^2) 
\eqne
for some $\psi_k,\phi_m \in C_0^\infty (Q)$ (each being a constant multiple of a derivative of $\phi$).

We note that $w$ satisfies the representation formula
\eqnb\label{repr_of_w}
w(t) = \int_0^t \Phi_{xxx}(t-s) \ast \left[ w(s)v(s)\right] \d s + \int_0^t \Phi(t-s) \ast f_v(s)\,\d s\quad \text{ for }t\in (0,T),
\eqne
where the last term is understood in the same sense as \eqref{conv_with_BHK_derivatives}. Indeed, since $v\in L^{10/3} (Q)$ (recall \eqref{reg_of_any_weak_sol}) we see that $wv, v^2 \in L^{5/3} (Q)$ and $w\in L^{10/3} (Q)\subset L^{5/3} (Q)$ (in particular $w\in L^{5/3,5/3}$ as $w=0$ outside $Q$), and so we can use Corollary \ref{cor_repr_of_sol} (since the choice $r=r'=5/3$, $l=l'=5/3$ satisfies \eqref{exponents_sharp_<} trivially for any $k\in \{ 0,1,2,3\}$) to obtain \eqref{repr_of_w}.

\subsection{The subcritical case}\label{sec_subcritical}
Here we focus on the subcritical case, namely the first case of \eqref{exponents_Serrin}.
\begin{theorem}[Local Serrin condition, subcritical case]\label{thm_subcritical_case}
Let $u$ be a weak solution to the surface growth model on a cylinder $Q$ and let $q,q' \in (1,\infty ]$ satisfy $1/q+4/q'<1$. If
\[
u_x \in L^{q',q} (Q)
\]
then $u\in C^\infty (Q)$.
\end{theorem}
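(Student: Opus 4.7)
The plan is to set $v \coloneqq u_x$, which satisfies the forced biharmonic heat equation $v_t + v_{xxxx} = -\partial_{xxx} v^2$ in $Q$, and to bootstrap its regularity on a nested chain of subcylinders $Q \Supset Q_0 \Supset Q_1 \Supset \dots$ by repeatedly applying Corollaries \ref{cor_repr_of_sol} and \ref{cor_est_nonhomo_BHE_in_cylinder} to cutoffs $w_j \coloneqq v \phi_j$, where $\phi_j \in C_0^\infty(Q_j;[0,1])$ equals one on $Q_{j+1}$. I first reduce to the case $q' < \infty$: since $Q$ has finite measure, H\"older's inequality allows replacing $q'=\infty$ by any sufficiently large finite $\widetilde q'$ for which $1/q + 4/\widetilde q' < 1$ still holds. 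When $q \in (1,2]$ I invoke the auxiliary regularity \eqref{reg_for_q_between_1_2} to ensure that the products $w_j v$ lie in $L^{5/3,5/3}$ so that the representation \eqref{repr_of_w} is legitimate at the starting step.

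The first stage is an iterative improvement of the spatial--temporal integrability of $w_j$. Writing $w_j v \in L^{l',l}$ with $1/l = 1/R + 1/q$, $1/l' = 1/R' + 1/q'$ whenever $w_j \in L^{R',R}$, Theorem \ref{thm_est_nonhomo_BHE} applied with $k=3$ to the principal term $\Phi_{xxx} \ast (w_j v)$ requires
\[
\frac{1}{R} + \frac{4}{R'} + \underbrace{\left( \frac{1}{q} + \frac{4}{q'} \right)}_{=\, 1-\delta} \;<\; \frac{1}{r} + \frac{4}{r'} + 1,
\]
so $1/r + 4/r'$ may be decreased by the fixed amount $\delta > 0$ at each iteration. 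The lower-order forcing $f_v$ from \eqref{what_is_fv} is harmless: each of its summands, interpreted in the sense of \eqref{conv_with_BHK_derivatives} by moving derivatives onto $\Phi$ and the compactly supported $\phi_j$, is absorbed by Corollary \ref{cor_est_nonhomo_BHE_in_cylinder}. After finitely many steps this yields $v \in L^{r',r}_{\mathrm{loc}}(Q)$ for any $r, r' \leq \infty$, and in particular $v \in L^\infty_{\mathrm{loc}}(Q)$.

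The second, more delicate, stage upgrades $L^\infty$ control to control of arbitrarily many spatial derivatives, gained a half derivative at a time using the fractional Sobolev space $H^s$. Applying $\Lambda^{1/2}$ to the representation formula for $w_j$ multiplies $\widehat{\Phi}$ by $|\xi|^{1/2}$ in Fourier; by \eqref{BHK_Linty_of_FT_of_Phi}, the temporal-convolution exponent appearing in the proof of Theorem \ref{thm_est_nonhomo_BHE} increases only by $1/8$ and hence remains summable with room to spare. The Sobolev--Slobodeckij representation \eqref{sobolev_slobodeckij_characterisation}--\eqref{norm_on_W^{s,2}}, combined with the now-established local $L^\infty$ bound on $v$, supplies a product inequality roughly of the form
\[
\| \Lambda^{1/2}(w_j v) \|_2 \;\lesssim\; \| \Lambda^{1/2} w_j \|_2 \, \| v \|_\infty + \| w_j \|_\infty \, \| \Lambda^{1/2} v \|_2,
\]
which carries the induction one half-derivative forward. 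Iterating yields $\partial_x^k v \in L^\infty_{\mathrm{loc}}(Q)$ for every $k \geq 0$, after which the equation itself reads off $v_t$, $v_{tt}$, and all mixed derivatives, giving $u \in C^\infty(Q)$.

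The main obstacle is the half-derivative iteration: the fractional product estimate must be arranged so that $H^{1/2}$-type norms of $w_j v$ on one subcylinder are dominated by norms of $w_j$ and $v$ on a slightly larger cylinder where both are already known to be regular. Precisely as in the integrability bootstrap, this is handled by refining the nested chain of cutoffs $\phi_j$, losing a small amount of cylinder at every half-derivative step but gaining a $1/2$ in spatial smoothness, so that after countably many refinements one achieves $C^\infty$ smoothness on any cylinder compactly contained in $Q$.
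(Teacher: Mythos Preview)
Your first stage is correct and in fact cleaner than the paper's. The paper reaches $v\in L^{\infty,\infty}$ in a single shot by first imposing a smallness condition $\|u_x\|_{L^{q',q}(Q)}\le\delta$ (legitimate after shrinking $Q$, since $q'<\infty$) and then absorbing the principal term $C\|wv\|_{q',q}\le C\delta\|w\|_{\infty,\infty}$ on the left; because this subtracts a quantity not yet known to be finite, it has to be justified by a separate regularisation argument (the paper's Step~1$'$). Your iterative scheme sidesteps all of this: each pass is a genuine \emph{a priori} linear bound
\[
\|w_j\|_{r',r}\;\le\;C\,\|w_j\|_{R',R}\,\|v\|_{L^{q',q}(Q)}+\text{(l.o.t.)}
\]
with the right side already finite from the previous cylinder, so no absorption and no mollification are needed.

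Your second stage, however, has a real gap. With the Sobolev--Slobodeckij product rule you state (which is only available for fractional order $s_2\in(0,1)$) and the constraint $s_1<1$ needed to keep the time kernel $(t-\tau)^{-(3+s_1)/4}$ integrable, the half-derivative iteration applied to $w_j=v\phi_j$ can only reach $\Lambda^{s}w_j\in L^{2,2}$ for $s<2$. Past that point the product estimate for $\Lambda^{s_2}(w_jv)$ would require $L^\infty$ control of $v_x$, which you have not established; so the iteration stalls before producing $v_{xx}\in L^{2,2}_{\mathrm{loc}}$, and the asserted conclusion $\partial_x^kv\in L^\infty_{\mathrm{loc}}$ for all $k$ does not follow. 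The paper avoids this by running the induction on the integer $k$ rather than on $s$: assuming $v,\dots,v^{(k)}\in L^\infty_{\mathrm{loc}}$, it applies the two half-steps $s_2=0$ then $s_2=1/2$ to $\eta=v^{(k)}\phi$ (not to $v\phi$) to obtain $v^{(k+1)}\in L^{2,2}_{\mathrm{loc}}$, and then runs a separate integrability bootstrap $L^{2,2}\to L^{3,3}\to L^{7,7}\to L^{\infty,\infty}$ on $z=v^{(k+1)}\phi$ (essentially your first-stage machinery, with the now-bounded $v$ as multiplier) before incrementing $k$. Your plan omits this interleaving $L^{2,2}\to L^\infty$ step, and without it the fractional iteration alone cannot climb past one spatial derivative.
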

\begin{proof}
First, by translation we can assume that $Q$ is contained within a time interval $(0,T)$ for some $T>0$; that is $Q\Subset \RR\times (0,T)$.

Secondly we can assume that $q'<\infty$. Indeed, since the choice of the exponents $q',q$ is subcritical, the case $q'=\infty$ can be reduced to $q'<\infty $ by a use of H\"older's inequality,
\[
\| u_x \|_{L^{q',q}(Q)} \leq C \| u_x \|_{L^{\infty,q}(Q)},
\] 
where $q'<\infty$ is sufficiently large such that $1/q+4/q'<1$ holds.

Thirdly it suffices to prove the result under a smallness condition  
\eqnb\label{smallness_condition_sharp_case}
\| u_x \|_{L^{q',q}(Q)}\leq \delta
\eqne
for $\delta >0$ sufficiently small such that
\eqnb\label{smallness_condition_bound_on_delta}
\delta < \frac{1}{2} \max \left( C_{q,q',\infty, \infty} , C_{q,q',q/2,q'/2}\right)^{-1} ,
\eqne
where the constants on the right-hand side are from Theorem \ref{thm_est_nonhomo_BHE}.
Indeed, since $q'<\infty$, $\| u_x \|_{L^{q',q}(\widetilde{Q})}<\delta$ for every sufficiently small subcylinder $\widetilde{Q}$ of $Q$. Thus if $u\in C^\infty (\widetilde{Q})$ for every such cylinder, then the same is true for $Q$.

The proof of the result proceeds in a few steps.\vspace{0.7cm}\\
\emph{Step 1.} Show that $u_x \in L^\infty (\widetilde{Q})$ for any $\widetilde{Q}\Subset Q$.\\

Let $v\coloneqq u_x$ and let $\phi\in C_0^\infty (Q;[0,1])$ be such that $\phi=1$ on $\widetilde{Q}$. By the representation \eqref{repr_of_w} Corollary \ref{cor_est_nonhomo_BHE_in_cylinder} applied with $r=r'=\infty$ gives
\eqnb\label{cor_applied_to_w}
\| w \|_{\infty , \infty}\leq C_{q,q',\infty,\infty } \| w v \|_{q',q} + C \left( \| v^2 \|_{L^{q'/2,q/2}(Q)} + \| v \|_{L^{q',q}(Q)} \right)
\eqne
Here we took $k=3$, $l=q$, $l'=q'$ for the main nonlinearity (i.e. the term $-(wv)_{xxx}$ in \eqref{eq_for_w}) as well as $l=q/2$, $l'=q'/2$ for the quadratic terms (i.e. the terms involving $v^2$ in \eqref{what_is_fv}) and $l=q$, $l'=q'$ for the linear terms (i.e. the terms involving $v$ in \eqref{what_is_fv}). Note that here we have implicitly assumed that $q\geq 2$ (while $q'\geq 2$ follows from the assumption); the case $q\in (1,2)$ can be reduced to the case $q\geq 2$ by exploiting \eqref{reg_for_q_between_1_2}, which we explain in detail in Section \ref{sec_the_case_q_1_2} below. 

Applying H\"older's inequality to the first term on the right-hand side of \eqref{cor_applied_to_w} gives
\[
\| w v \|_{q',q} \leq \| w  \|_{\infty , \infty } \| v \|_{L^{q',q}(Q)}
\]
Thus given the smallness condition \eqref{smallness_condition_bound_on_delta} we can absorb this term on the left hand side to see that
\eqnb\label{w_in_Linfty_claim}
w\in L^\infty (Q)
\eqne
and so in particular $v\in L^\infty (\widetilde{Q})$.
Note however, that there is a gap in this step, since subtracting $\| w \|_{L^\infty (Q)}$ we have implicitly assumed that this norm is finite. If it is not the case then such argument may have a potentially fatal flaw. 

This gap can be dealt with by a rather technical procedure of regularising the equation \eqref{eq_for_w} and taking a limit of the solutions to the regularised equations, which we explain in more detail in the next step.\vspace{0.7cm}\\
\emph{Step 1'.} Verify \eqref{w_in_Linfty_claim}\\

Extend $v$ by zero outside $Q$. Let $d\coloneqq \inf \{ t \, : \, (x,t) \in Q \}>0$. For every $\varepsilon \in (0,d/2 )$ let $w^\varepsilon\in L^{\infty , \infty }$ be a solution of the problem
\eqnb\label{to_show_for_w^epsilon}
\begin{cases}
w_t^\varepsilon + w_{xxxx}^\varepsilon= - \p_{xxx} (v_\varepsilon w^\varepsilon ) + f_{v_\varepsilon}\qquad \text{ in } \RR\times (0,T),\\
w^\varepsilon(0)=0,
\end{cases}
\eqne
where $v_\varepsilon$ denotes the mollification of $v$ (in both space and time), and the initial condition is understood in the sense that $w=0$ in $\RR\times (0,c)$ for some $c>0$ independent of $\varepsilon$ (one can take for example $c\coloneqq d$).
The existence of such a $w^\varepsilon$ follows from the Picard iteration, which we now briefly outline. Let 
\[
w_0^\varepsilon (t) \coloneqq  \int_0^t \Phi (t-s) \ast f_{v_\varepsilon} (s) \, \d s,\qquad \text{ for } t>0,
\] 
and then set
\[
w_{m+1}^\varepsilon (t) \coloneqq  \int_0^t \Phi_{xxx}(t-s)\ast [v_\varepsilon (s) w_m^\varepsilon (s) ]\, \d s + w_0^\varepsilon (t),\qquad \text{ for } t>0, m=0,1,\ldots
\]
Since $v_\varepsilon, f_{v_\epsilon } \in C_0^\infty (\RR \times (0,T))$, Corollary \ref{cor_est_nonhomo_BHE_in_cylinder} gives that $w_m^\varepsilon \in L^{\infty , \infty }$ for each $m$. Moreover each $w^\varepsilon_m$ satisfies the equation
\eqnb\label{equation_for_w_k^epsilon}
\p_t w_m^\varepsilon + \p_{xxxx} w_m^\varepsilon = -\p_{xxx} (v_\varepsilon w_{m-1}^\varepsilon ) + f_{v_\varepsilon} \quad \text{ in }\RR\times (0,T)
\eqne
and $w^\varepsilon_m (t)=0$ for $t<d/2$. By Corollary \ref{cor_est_nonhomo_BHE_in_cylinder}
\[\begin{split}
\| w_{m+1}^\varepsilon - w_m^\varepsilon \|_{\infty, \infty} &\leq C_{q,q',\infty, \infty} \| v_\varepsilon (w_m^\varepsilon-w_{m-1}^\varepsilon)\|_{q',q}\leq C_{q,q',\infty, \infty} \| v_\varepsilon \|_{q',q} \| w_m^\varepsilon-w_{m-1}^\varepsilon\|_{\infty ,\infty} \\
&\leq C_{q,q',\infty, \infty} \| v \|_{q',q} \| w_m^\varepsilon-w_{m-1}^\varepsilon\|_{\infty ,\infty} \leq \frac{1}{2}\| w_m^\varepsilon-w_{m-1}^\varepsilon\|_{\infty, \infty},
\end{split}
\]
where also used H\"older's inequality, the fact that mollification does not increase $L^p$ norms and the smallness assumption \eqref{smallness_condition_sharp_case}. 
Thus $\{ w^\varepsilon_m \}$ is a Cauchy sequence in $L^{\infty,\infty }$ and so
\[
w^\varepsilon_m \to w^\varepsilon \quad \text{ in } L^{\infty,\infty }\text{ as }m\to \infty
\]
for some $w^\varepsilon \in L^{\infty , \infty}$ such that $w^\varepsilon (t)=0$ for $t<d/2$. Taking the limit $m\to \infty$ in \eqref{equation_for_w_k^epsilon} gives \eqref{to_show_for_w^epsilon} (recall we mean partial differential equations in the distributional sense), which concludes the proof of the existence of $w^\varepsilon$.

We conclude this step by showing that $w^\varepsilon \stackrel{\ast }{\rightharpoonup} w$ as $\varepsilon \to 0$ (on some subsequence) in $L^{\infty ,\infty }$ (and so in particular $w\in L^{\infty,\infty}$, as required). 

Note that $w^\varepsilon$ satisfies the representation
\[
{w^\varepsilon} (t) \coloneqq - \int_0^t \Phi_{xxx} (t-s) \ast [v_\varepsilon (s) w^\varepsilon (s) ] \d s + w_0^\varepsilon (t)
\]
(cf. \eqref{repr_of_w}). Thus Corollary \ref{cor_est_nonhomo_BHE_in_cylinder} gives
\[\begin{split}
\| w^\varepsilon \|_{\infty, \infty} &\leq C_{q,q',\infty, \infty} \| v_\varepsilon \|_{q',q} \|w^\varepsilon \|_{\infty, \infty } + C \left( \| v_\varepsilon^2 \|_{q'/2,q/2}+ \| v_\varepsilon \|_{q',q} \right)\\
&\leq C_{q,q',\infty, \infty} \| v \|_{q',q} \|w^\varepsilon \|_{\infty,\infty } + C \left( \| v \|_{q',q}^2+ \| v \|_{q',q} \right),
\end{split}
\]
as in \eqref{cor_applied_to_w}.
Therefore the smallness condition \eqref{smallness_condition_sharp_case} lets us absorb the first term on the right hand side to obtain
\[
\| w^\varepsilon \|_{\infty, \infty}  \leq C 
\]
for all $\varepsilon >0$. In the same way one obtains
\[
\| w^\varepsilon \|_{q',q}  \leq C 
\]
Hence there exists a sequence $\varepsilon_k\to 0$ and $\widetilde{w} \in L^{\infty, \infty} \cap L^{q',q}$ such that $w^{\varepsilon_k} \stackrel{\ast }{\rightharpoonup} \widetilde{w} $ in $L^{\infty , \infty}$ and $w^{\varepsilon_k} {\rightharpoonup} \widetilde{w} $ in $L^{q',q}$. Since $v\in L^{2,2}$ we see that $v_{\varepsilon_k} \to v$ in $L^{2,2}$ (as a property of the mollification operation) and thus we can take the limit in the partial differential equation in \eqref{to_show_for_w^epsilon} (in the sense of distributions) to obtain that $\widetilde{w}$ and $w$ satisfy the same partial differential equation and are both elements of $L^{q',q}$. 
 It remains to show that $w=\widetilde{w}$. Let $h\coloneqq w-\widetilde{w}$. Then $h\in L^{q',q}$, $h=0$ in $\RR\times (0,c)$ for some $c>0$ (since the same is true for both $w$, $\widetilde{w}$) and
 \[
 h_t+h_{xxxx}=-\p_{xxx}(vh).
 \]
As in \eqref{repr_of_w}, $h$ satisfies the representation formula,
\[
h(t)=-\int_0^t \Phi_{xxx} (t-s) \ast [v(s)h(s)]\d s.
\]
Thus Corollary \ref{cor_est_nonhomo_BHE_in_cylinder} gives
\[
\| h \|_{q',q} \leq C_{q,q',q/2,q'/2} \| v \,h\|_{q'/2,q/2} \leq C_{q,q',q/2,q'/2} \| v\|_{q',q} \| h \|_{q',q} \leq \frac{1}{2} \| h \|_{q',q},
\]
where we assumed that $\delta$ from the smallness condition \eqref{smallness_condition_sharp_case} satisfies $\delta <C_{q,q',q/2,q'/2}/2$. Thus, since $\| h \|_{q',q}<\infty$, the above inequality implies $h=0$, as required.\vspace{0.7cm}\\
\emph{Step 2.} Show boundedness of higher derivatives in $x$.\\

We proceed by induction. We will set $v^{(k)}\coloneqq\p_x^{k} v$ for brevity. We will show that if $v^{(k)}\in L^{\infty, \infty} (Q)$ then $v^{(k+1)}\in L^{\infty , \infty} (\widetilde{Q})$ for any subcylinder $\widetilde{Q}\Subset Q$.

Let $M>1$ be such that $\| v\|_{L^{\infty , \infty }(Q)} , \ldots , \| v^{(k)}\|_{L^{\infty , \infty }(Q)} <M$ and let $\phi \in C_0^\infty (Q';[0,1])$ be such that $\phi=1 $ on $Q''$ for some cylinders $Q',Q''$ such that $\widetilde{Q}\Subset Q''\Subset Q' \Subset Q$. Let 
\[
z\coloneqq v^{(k+1)}\phi .
\]
Then $z$ satisfies 
\[
\begin{split}
z_t+z_{xxxx} &=-(\p_x^{k+4}v^2)\phi + 4v^{(k+4)} \phi_x + f_1\\
&= -(\p_x^{k+4}v^2 )\phi + 4(v^{(k+1)}\phi_x)_{xxx}   + f_1+f_2\\
&=  -2(v^{(k+1)}v)_{xxx} \,\phi + 4(v^{(k+1)}\phi_x)_{xxx}   + f_1+f_2+f_3\\
&= -2(v\,z )_{xxx} + 4(v^{(k+1)}\phi_x)_{xxx}   + f_1+f_2+f_3 +f_4
\end{split}
\]
in $\RR \times (0,T)$ (in the sense of distributions), where
\[
\begin{cases}
f_1\coloneqq v^{(k+1)}\phi_t + 6v^{(k+3)}\phi_{xx} +4v^{(k+2)} \phi_{xxx} + v^{(k+1)}\phi_{xxxx},\\
f_2 \coloneqq -12v^{(k+3)}\phi_{xx}-12 v^{(k+2)} \phi_{xxx}-4v^{(k+1)} \phi_{xxxx},\\
f_3\coloneqq -\sum_{j=1}^{k} {k+1 \choose j } \left( v^{(j)}v^{(k+1-j)} \right)_{xxx} \phi,\\
f_4\coloneqq 6 (v^{(k+1)}v)_{xx} \phi_x+6 (v^{(k+1)}v)_x \phi_{xx} + 2 v^{(k+1)}v\,\phi_{xxx}.
\end{cases}
\]
In short, 
\eqnb\label{eq_for_z_higher_derivs}
z_t+z_{xxxx}=-2(v\,z )_{xxx} + 4(v^{(k+1)}\phi_x)_{xxx}   + F_{k+1},
\eqne
where 
\[
F_{k+1} \coloneqq f_1+f_2+f_3+f_4
\]
consists of (at most) second order derivatives (with respect to $x$) of (linear or quadratic) terms which include $v,\ldots , v^{(k+1)}$ (and a derivative of the cutoff function $\phi$).

Suppose for the moment that 
\eqnb\label{l2_assumption}
v^{(k+1)}\in L^{2,2} (Q').
\eqne
Then $z\in L^{2,2}$ and $vz, v^{(k+1)}\phi_x\in L^{2,2}$ and so Corollary \ref{cor_repr_of_sol} gives the representation formula for $z$,
\eqnb\label{repr_of_z}\begin{split}
z(t) =& 2\int_0^t \Phi_{xxx} (t-s)\ast [v(s)z(s)]\d s -4\int_0^t \Phi_{xxx} (t-s) \ast [v^{(k+1)}(s)\phi_x(s)]\d s \\
&+ \int_0^t \Phi (t-s)\ast F_{k+1}(s)\d s,\qquad t\in (0,T),
\end{split}
\eqne
where the last term is understood in the same sense as \eqref{conv_with_BHK_derivatives}. Recalling that $z=v^{(k+1)}$ in $Q''$ and using Corollary \ref{cor_est_nonhomo_BHE_in_cylinder} gives
\eqnb\label{bootstraping_of_Lp_for_z}\begin{split}
\| v^{(k+1)} \|_{L^{r',r}(Q'')} \leq \| z \|_{r',r}&\leq C_\phi (\| v z \|_{l',l} + \| v^{(k+1)} \phi_x \|_{l',l} + M^2  )\\
&\leq C_\phi M (\| v^{(k+1)} \|_{L^{l',l}(Q')} + M^2 )
\end{split}
\eqne
whenever $r,r',l,l'\in [1,\infty ]$ satisfy
\[
\frac{1}{l}+\frac{4}{l'}<\frac{1}{r}+\frac{4}{r'}+1.
\] 
In other words we have obtained an increase in the integrability of $v^{(k+1)}$ with the cost of shrinking the domain slightly. It remains to bootstrap the inequality in \eqref{bootstraping_of_Lp_for_z}. Namely let $Q'''$ be a cylinder such that 
\[
\widetilde{Q}\Subset Q''' \Subset Q'' \Subset Q'
\]
and let $\phi,\phi',\phi'' \in C_0^\infty (Q';[0,1])$ be the cutoff functions such that $\phi'$ cuts-off $Q'''$ in $Q''$ (i.e. $\phi'=1$ on $Q'''$ and $\phi'=0$ outside $Q''$) and $\phi''$ cuts-off $\widetilde{Q}$ in $Q'''$. Then apply \eqref{bootstraping_of_Lp_for_z} with $l'=l=2$, $r'=r=3$ to obtain
\[
\| v^{(k+1)} \|_{L^{3,3}(Q'')} \leq C_\phi M (\| v^{(k+1)} \|_{L^{2,2}(Q')} + M^2 ).
\]
Taking $l'=l=3$, $r'=r=7$ we obtain
\[
\| v^{(k+1)} \|_{L^{7,7}(Q''')} \leq C_{\phi'} M (\| v^{(k+1)} \|_{L^{3,3}(Q'')} + M^2 ).
\]
Finally the choice $l'=l=7$, $r'=r=\infty$ gives
\[
\| v^{(k+1)} \|_{L^{\infty,\infty }(\widetilde{Q})} \leq C_{\phi''} M (\| v^{(k+1)} \|_{L^{7,7}(Q''')} + M^2 ),
\]
as required. Therefore, in order to complete this step, it remains to verify \eqref{l2_assumption}.\vspace{0.7cm}\\
\emph{Step 2'.} Verify \eqref{l2_assumption}.\\

The case $k=0$ follows from the definition of a weak solution (see Definition \ref{def_of_weak_sol}; recall also that $v=u_x$). In the case $k\geq 1$ let $\phi \in C_0^\infty (Q;[0,1])$ be such that $\phi =1$ on a cylinder $\mathcal{Q}$ such that $Q' \Subset \mathcal{Q} \Subset Q$ and set
\[
\eta \coloneqq v^{(k)}\phi.
\]
As in \eqref{eq_for_z_higher_derivs}, $\eta $ satisfies
\eqnb\label{eq_for_eta_high_derivs}
\eta_t+\eta_{xxxx}=-2(v\,\eta )_{xxx} + 4(v^{(k)}\phi_x)_{xxx}   + F_k,
\eqne
where $F_k$ consists of (at most) second order derivatives (with respect to $x$) of terms consisting of bounded functions $v,\ldots ,v^{(k)}$ (multiplied by a derivative of the cutoff function $\phi$). Namely
\[
F_k = \phi_k^{(2)} \p_{xx} F_k^{(2)} + \phi_k^{(1)}\p_{x} F_k^{(1)} + \phi_k^{(0)} F_k^{(0)}  ,
\]
where $F_k^{(0)}, F_k^{(1)}, F_k^{(2)} \in L^{\infty,\infty}(Q)$ and $\phi_k^{(l)} \in C_0^\infty (Q)$, $l=0,1,2$. Now observe that we can absorb the functions $\phi_k^{(l)}$ into $F_k^{(l)}$, $l=0,1,2$, by the chain rule. Namely there exist $G_k^{(0)}, G_k^{(1)}, G_k^{(2)} \in L^{\infty,\infty}(Q)$ such that
\[
F_k =  \p_{xx} G_k^{(2)} + \p_{x} G_k^{(1)} + G_k^{(0)}  .
\]

As in \eqref{repr_of_z}, $\eta$ satisfies the representation
\eqnb\label{repr_of_eta}
\begin{split}
\eta (t) &= 2\int_0^t \Phi_{xxx} (t-s)\ast [v(s)\eta (s)]\d s -4\int_0^t \Phi_{xxx} (t-s) \ast [v^{(k)}(s)\phi_x(s)]\d s \\
&+ \int_0^t \Phi (t-s)\ast F_{k}(s)\d s,\qquad t\in (0,T).
\end{split}
\eqne
At this point we pause for a moment and comment on our strategy in an informal way. Formally, one could take the $x$-derivative in \eqref{repr_of_eta} to obtain
\[
\begin{split}
\eta_x (t) &= 2\int_0^t \Phi_{xxx} (t-s)\ast [v_x(s)\eta (s)+v(s)\eta_x (s)]\d s \\
&-4\int_0^t \Phi_{xxx} (t-s) \ast [v^{(k+1)}(s)\phi_x(s)+v^{(k)}(s)\phi_{xx}(s)]\d s \\
&+ \int_0^t \Phi_x (t-s)\ast F_{k}(s)\d s,\qquad t\in (0,T).
\end{split}
\]
We would now like to use the estimates from Corollary \ref{cor_est_nonhomo_BHE_in_cylinder} to obtain an estimate on $\| \eta_x \|_{2,2}$ and so deduce that $v^{(k+1)}\in L^{2,2}(\mathcal{Q})$. In fact, the terms including $v_x\eta$, $v^{(k)}\phi_{xx}$ and $F_k$ could be dealt with easily, since $v, \ldots , v^{(k)} \in L^{\infty , \infty }(Q)$ (and $k\geq 1$), and the term including $v \eta_x$ could be dealt with by using the smallness condition \eqref{smallness_condition_sharp_case} as in step 1'. However, the term including $v^{(k+1)}\phi_x$ (that is the one originating from the linear part, cf. the definition of $f_1$) cannot be dealt with in this way (as at this point we know nothing about $v^{(k+1)}$). 

The way to deal with this problem is to increase the regularity of $\eta$ by a ``half of the derivative in $x$'' at a time. Namely we will first show that $\Lambda^{1/2}\eta \in L^{2,2}$ and then deduce that $\Lambda \eta' \in L^{2,2}$, where 
\eqnb\label{def_of_eta'}
\eta' \coloneqq v^{(k)} \phi'
\eqne
and $\phi' \in C_0^\infty (\mathcal{Q};[0,1])$ is such that $\phi'=1$ on $Q'$. Thus, since $2\pi\| \Lambda \eta' \|_{2,2}=\| \p_x \eta' \|_{2,2}$ (recall \eqref{lambda_and_derivative}) we will obtain $\p_x \eta' \in L^{2,2}$, and so in particular $v^{(k+1)} \in L^{2,2}(Q')$. 

Taking the Fourier transform (in $x$) of \eqref{repr_of_eta} we obtain
\eqnb\label{repr_of_eta_fourier}
\begin{split}
\widehat{\eta }(t) =& -16 \pi^3 i\int_0^t \xi^3 \widehat{\Phi}  (t-s) \widehat{v\eta }(s)\d s +32 \pi^3 i \int_0^t \xi^3 \widehat{\Phi} (t-s) \widehat{v^{(k)}\phi_x}(s)\d s \\
&+\sum_{m=0}^2 (2\pi i)^m \int_0^t \xi^{m} \widehat{\Phi } (t-s) \widehat{G_{k}^{(m)}}(s)\d s \qquad t\in (0,T).
\end{split}
\eqne
Multiplying \eqref{repr_of_eta_fourier} by $|\xi|^s$, where $s=s_1+s_2$, $s_1,s_2\in [ 0,1)$, taking the $L^2$ norm (in $\xi$) and using Plancherel's property we obtain
\[
\begin{split}
\| \Lambda^s \eta (t) \|_2 \leq &C \int_0^t \| \xi^{3+s_1} \widehat{\Phi } (\xi ,t-s) \|_{\infty } \| \xi^{s_2} \widehat{v\eta }(\xi, s) \|_2 \d s\\
&+C \int_0^t \| \xi^{3+s_1} \widehat{\Phi } (\xi ,t-s) \|_{\infty } \| \xi^{s_2} \widehat{v^{(k)} \phi_x }(\xi, s) \|_2 \d s\\
&+C \sum_{m=0}^2 \int_0^t \| \xi^{m+s} \widehat{\Phi } (\xi ,t-s) \|_{\infty } \| G_k^{(m)} (s) \|_2 \d s\\
\leq & C \int_0^t   \frac{\| \Lambda^{s_2} (v(s) \eta (s) ) \|_2+\| \Lambda^{s_2} (v^{(k)}(s) \phi_x (s) ) \|_2}{(t-s)^{(3+s_1)/4}} \d s+ C_T\int_0^t \frac{\| G_k(s)\|_2}{(t-s)^{(2+s)/4}} \d s,
\end{split}
\]
where $G_k =|G_k^{(0)}|+|G_k^{(1)}|+|G_k^{(2)}|$ and we also used \eqref{BHK_Linty_of_FT_of_Phi} as well as applied the integral version of the Minkowski inequality. Taking the $L^2$ norm in time and using Young's inequality for convolutions we obtain
\eqnb\label{lambda^s_estimate}
\| \Lambda^s \eta \|_{2,2}\leq C_{s_1,s_2,\phi} \left( \|\Lambda^{s_2} (v\eta ) \|_{2,2} + \|\Lambda^{s_2} (v^{(k)} \phi_x ) \|_{2,2} + \| G_k \|_{2,2} \right)
\eqne
for $s_1,s_2\in [0,1)$, $s=s_1+s_2$. Taking $s_1=1/2$, $s_2=0$ we see that $\Lambda^{1/2} \eta \in L^{2,2}$. Thus $\eta \in L^2 ((0,T);H^{1/2})$ and, thanks to the Sobolev--Slobodeckij characterisation \eqref{sobolev_slobodeckij_characterisation}, 
\[ \eta \in L^2 ((0,T); W^{1/2,2}(\RR )),\]
that is
\[
\int_0^T \int_\RR \int_\RR \frac{|\eta (x,t)-\eta (y,t)|^2}{|x-y|^{2}} \d y\, \d x\, \d t <\infty .
\]
Restricting the time domain to $\mathcal{I}$ and spatial domain to $\mathcal{B}$, where $\mathcal{Q}=\mathcal{B} \times \mathcal{I}$, we obtain that
\eqnb\label{tempus}
\int_\mathcal{I} \int_\mathcal{B} \int_\mathcal{B} \frac{|v^{(k)} (x,t)-v^{(k)} (y,t)|^2}{|x-y|^{2}} \d y\, \d x\, \d t <\infty .
\eqne
Thus
\[
v^{(k)} \in L^2 (\mathcal{I} ; W^{1/2,2}( \mathcal{B} ))
\]
Now letting $\eta'$ be the cutoff of $v^{(k)}$ as in \eqref{def_of_eta'} we can apply the triangle inequality and \eqref{tempus} to see that both $v \eta'$ and $v^{(k)} \phi'$ belong to $L^2 (\mathcal{I} ; W^{1/2,2}( \mathcal{B} ))$ as well (recall that $v_x \in L^{\infty , \infty }$, since $k\geq 1$). Thus, since $v \eta'$ and $v^{(k)} \phi'$ are supported within $\mathcal{Q}$, they belong to $L^2 ((0,T); W^{1/2,2}( \RR))=L^2 ((0,T); H^{1/2})$ (by the Sobolev--Slobodeckij characterisation \eqref{sobolev_slobodeckij_characterisation}), and so 
\[
\|\Lambda^{1/2} (v\eta' ) \|_{2,2}, \|\Lambda^{1/2} (v^{(k)} \phi'_x ) \|_{2,2} <\infty.
\]
Therefore, we can use \eqref{lambda^s_estimate} (applied to $\eta'$, rather than $\eta$) with $s_1=s_2=1/2$ to obtain
\[
\| \eta'_x \|_{2,2} =2\pi \| \Lambda \eta' \|_{2,2} < \infty,
\]
where we have also recalled \eqref{lambda_and_derivative}. In particular $\| v^{(k+1)} \|_{L^{2,2} (Q')} = \| \eta'_x \|_{L^{2,2} (Q')} < \infty$, as required. \vspace{0.7cm}\\
\emph{Step 3.} Deduce the smoothness of $u$.\\

From the surface growth equation, $u_t+u_{xxxx}+\p_{xx} u_x^2=0$, and steps 1 and 2 we see that $u_t$ (in the sense of weak derivatives) is bounded on every compact subset of $Q$. Similarly every derivative (in both $x$ and $t$) is bounded on every compact subset of $Q$. Thus the Rellich--Kondrachov embedding (see, for example, Theorem 6.3 in \cite{adams_fournier}) gives smoothness of $u$ in $Q$.\end{proof}

\subsection{The critical case}\label{sec_critical}
We now focus on the critical case, namely the second case of \eqref{exponents_Serrin}. 
\begin{theorem}[Local Serrin condition, critical case]\label{thm_critical_case}
Let $u$ be a weak solution to the surface growth model on a cylinder $Q$ and let $q,q' \in (1,\infty ]$ satisfy $1/q+4/q'=1$. If
\[
u_x \in L^{q',q} (Q)
\]
then $u\in C^\infty (Q)$.
\end{theorem}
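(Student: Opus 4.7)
The plan is to reduce the critical case to Theorem~\ref{thm_subcritical_case} by first improving the integrability of $u_x$ on sufficiently small subcylinders. The starting observation is that $q\in(1,\infty]$ together with $1/q+4/q'=1$ forces $q'>4$; in particular $q'<\infty$. Therefore, by absolute continuity of the Lebesgue integral applied to the integrable map $t\mapsto\|u_x(t)\|_{L^q(B)}^{q'}$, for every $z_0\in Q$ and every prescribed $\delta>0$ one can choose a subcylinder $\widetilde{Q}\Subset Q$ containing $z_0$, with sufficiently short time extent, such that $\|u_x\|_{L^{q',q}(\widetilde{Q})}<\delta$. Since smoothness is a local property, it suffices to prove $u\in C^\infty(\widetilde{Q})$ for each such $z_0$, which places us in the same smallness regime as in the proof of Theorem~\ref{thm_subcritical_case}.

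On such a $\widetilde{Q}$ I would then adapt Step~1 of the subcritical proof, but target the space $L^{r',\infty}$ rather than $L^{\infty,\infty}$, where $r'$ is chosen with $4<r'<\infty$; this is compatible with the coupling condition $r'>q'/(q'-1)$, because $q'>4$ gives $q'/(q'-1)<4/3<4$. Writing $w\coloneqq v\phi$ as in \eqref{repr_of_w}, I would apply Corollary~\ref{cor_est_nonhomo_BHE_in_cylinder} to the main term $(wv)_{xxx}$ via the equality case \eqref{exponents_not_sharp} of Theorem~\ref{thm_est_nonhomo_BHE}, taken with $k=3$, $l=q$, $l'=r'q'/(r'+q')$, $r=\infty$ and target time exponent $r'$; the hypothesis $1<l'<r'<\infty$ follows at once from $r'>q'/(q'-1)$ and $r',q'<\infty$. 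The linear and quadratic forcing terms in $f_v$ fall into the strict case \eqref{exponents_sharp_<}. Combined with H\"older, $\|wv\|_{l',q}\leq\|w\|_{r',\infty}\,\|v\|_{L^{q',q}(\widetilde{Q})}<\delta\,\|w\|_{r',\infty}$, so that choosing $\delta$ small relative to the constant from Theorem~\ref{thm_est_nonhomo_BHE} allows the absorption step and yields $\|w\|_{r',\infty}<\infty$. Since the absorption presupposes a priori finiteness of $\|w\|_{r',\infty}$, I would justify it by mollifying $v$ and running the Picard iteration of Step~1' of Theorem~\ref{thm_subcritical_case} verbatim, with $L^{\infty,\infty}$ systematically replaced by $L^{r',\infty}$; the required uniqueness of the weak-$*$ limit reduces to the linear equation $h_t+h_{xxxx}=-\partial_{xxx}(vh)$ in the class $L^{q',q}$, to which Corollary~\ref{cor_repr_of_sol} applies in the same equality case \eqref{exponents_not_sharp} (now with $l=q/2$, $l'=q'/2$, $r=q$, $r'=q'$, whose hypothesis $1<l'<r'<\infty$ again uses $q'>4>2$).

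Once $u_x\in L^{r',\infty}(\widetilde{Q})$ is established with $r'>4$, the exponent pair $(r',\infty)$ satisfies $1/\infty+4/r'=4/r'<1$ and thus sits strictly in the subcritical Serrin range. Theorem~\ref{thm_subcritical_case} then applies on $\widetilde{Q}$ and yields $u\in C^\infty(\widetilde{Q})$; since $z_0\in Q$ was arbitrary, $u\in C^\infty(Q)$. The main obstacle, in my view, is carrying out the equality case \eqref{exponents_not_sharp} cleanly: unlike the strict case used throughout the subcritical argument, it forces the target exponent $r'$ to be finite and imposes the lower bound $r'>q'/(q'-1)$, so that the smallness threshold $\delta$ depends on the precise choice of $r'$. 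Fortunately $q'>4$ leaves ample room, and a single gain of integrability already places us in the subcritical regime, so no further bootstrap beyond one invocation of Theorem~\ref{thm_subcritical_case} is required.
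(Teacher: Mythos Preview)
Your proposal is correct and follows essentially the same route as the paper: use $q'<\infty$ to reduce to a small cylinder where $\|u_x\|_{L^{q',q}}<\delta$, apply the representation formula and the equality case \eqref{exponents_not_sharp} of Theorem~\ref{thm_est_nonhomo_BHE} (with absorption justified by the mollification/Picard argument of Step~1') to push $u_x$ into a strictly subcritical class, and then invoke Theorem~\ref{thm_subcritical_case}. The only cosmetic difference is the target space---the paper aims for $L^{p',p}$ with $p\in(q,\infty)$, $p'\in(q',\infty)$, while you aim for $L^{r',\infty}$ with $r'>4$---and, like the paper, your argument tacitly assumes $q\geq 2$ (so that $q/2\geq 1$ in the quadratic lower-order terms and in the uniqueness step), with the range $q\in(1,2]$ handled by the same short detour as in Section~\ref{sec_the_case_q_1_2}.
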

\begin{proof}
Similarly as in Theorem \ref{thm_subcritical_case} we can assume that $Q\Subset \RR\times (0,T)$ and we can assume the smallness condition \eqref{smallness_condition_sharp_case}, since $q'<\infty $.

Let $\widetilde{Q}\Subset Q$ and fix $p' \in (q',\infty )$, $p \in (q,\infty )$ satisfying $1/p+4/p'<1$. We will show that $u_x\in L^{p',p} (\widetilde{Q})$  (then the claim follows from Theorem \ref{thm_subcritical_case}).
 
As in the proof of Theorem \ref{thm_subcritical_case} let $v\coloneqq u_x$, $\phi\in C_0^\infty (Q;[0,1])$ be such that $\phi=1$ on $\widetilde{Q}$. As in \eqref{repr_of_w} $w\coloneqq v\phi$ satisfies the representation \eqref{repr_of_w},
\eqnb\label{repr_of_w_again}
w(t) = -\int_0^t \Phi_{xxx}(t-s) \ast \left[ w(s)v(s)\right] \d s + \int_0^t \Phi(t-s) \ast f_v(s)\,\d s.
\eqne
 Corollary \ref{cor_est_nonhomo_BHE_in_cylinder} applied with $r=p$, $r'=p'$ gives
 \eqnb\label{cor_applied_to_w_critical}
\| w \|_{p' , p}\leq C_{l,l',p,p' } \| w v \|_{l',l} + C \left( \| v^2 \|_{L^{q'/2,q/2}(Q)} + \| v \|_{L^{q',q}(Q)} \right),
\eqne
where $1/l=1/p+1/q$, $1/l'=1/p'+1/q'$ (note $l\in (q,p)$, $l'\in (q',p')$), cf. \eqref{cor_applied_to_w}. (Note that again we have implicitly assumed that $q>2$, see the section below for the case $q\in (1,2]$.) Applying H\"older's inequality to first term on the right-hand side of \eqref{cor_applied_to_w_critical} and using the smallness condition \eqref{smallness_condition_sharp_case} with $\delta < 1/2 C_{l,l',p,p' }$ gives
\eqnb\label{cor_applied_to_w_critical_sowhat}
\| w \|_{p' , p}\leq \frac{1}{2} \| w \|_{p',p} + C \left( \| v \|_{L^{q',q}(Q)}^2 + \| v \|_{L^{q',q}(Q)} \right)
\eqne
Thus, subtracting the first term on the right hand side we obtain $w\in L^{p',p}(Q)$, which gives in particular that $v\in L^{p',p}(\widetilde{Q})$, as required. 

Note that, similarly as in step 1 of the proof of Theorem \ref{thm_subcritical_case}, this subtraction requires a rigorous justification, and can be verified similarly as in step 1' of that proof.
\end{proof}
\subsection{The case $q\in (1,2]$}\label{sec_the_case_q_1_2}
Here we briefly show that if $q,q'$ satisfy $q\in (1,2]$ and $1/q+4/q'\leq 1$ then one can apply a similar argument as in \eqref{cor_applied_to_w_critical} to obtain that $v\in L^{p',p}(Q')$ for $p=2$, any $p'\in (\max(q',8/3), \infty )$ and any subcylinder $Q'\Subset Q$. Note that, since $1/p+4/p'<1$, the claim (i.e. $u_x \in C^\infty (Q)$) then follows by Theorem \ref{thm_subcritical_case}.\vspace{0.1cm}\\

In order to show that $v\in L^{p',p}(Q')$ observe that the last term of \eqref{repr_of_w_again} can be bounded in the $L^{p',p}$ norm by
\[
C\left( \| v^2 \|_{L^{8/3,1} (Q)} + \| v \|_{L^{q',q}(Q) } \right) ,
\]
where we used Corollary \ref{cor_est_nonhomo_BHE_in_cylinder} again (recall that $v\in L^{16/3,2}(Q)$ (see \eqref{reg_for_q_between_1_2}) and observe that the choice of exponents $r'=p'$, $r=2$, $l'=8/3$, $l=1$ satisfies \eqref{exponents_not_sharp} with $k=2$). Thus \eqref{cor_applied_to_w_critical} follows as in the proof above with $\| v^2 \|_{L^{q'/2,q/2}(Q)}$ replaced by $\| v^2 \|_{L^{8/3,1}(Q)}$. As in \eqref{cor_applied_to_w_critical_sowhat} we arrive at
\[
\| w \|_{p' , p}\leq \frac{1}{2} \| w \|_{p',p} + C \left( \| v \|_{L^{16/3,2}(Q)}^2 + \| v \|_{L^{q',q}(Q)} \right),
\]
from which we deduce that $v\in L^{p',p} (\widetilde{Q} ) $ for any $\widetilde{Q} \Subset Q$ (as in the proof above; namely by regularising the equation and taking a limit as in step 1' of the proof of Theorem \ref{thm_subcritical_case}). \vspace{0.1cm}\\

Finally, observe that the issue discussed in this section does not appear in the case of the Navier--Stokes equation. In fact, in the case of the NSE the dimension of space is larger that the order of the nonlinearity (i.e. $2$); to be more precise the constraint of the exponents $2/q'+3/q \leq 1$ (recall \eqref{serrin_cond_NSE}) implies $q \geq 3>2$.

\section*{Conclusion}
We have proved that a weak solution to the surface growth model \eqref{SGM} on a cylinder $Q$ is smooth if it satisfies the local Serrin condition $u\in L^{q',q} (Q)$ where $q\in [2,\infty ]$, $q'\in [4, \infty ]$ are such that either $1/q+4/q' <1$ or $1/q+4/q'=1$, $q'<\infty$.
Our analysis excludes the endpoint case $q=1$, $q'=\infty$ since in this case we cannot assume the smallness condition \eqref{smallness_condition_sharp_case}. This case is not only particularly interesting, but also particularly challenging. In fact, the analogous problem in the case of the NSE is the problem whether a weak solution $u\in L^{\infty , 3}(Q)$ to the NSE on $Q$ is regular in $Q$, and it was resolved in a deep paper by \cite{ESS_2003} using a blow-up technique. We believe that, despite the fact that the SGM is a one-dimensional equation, it might be a more difficult problem than in the case of the NSE, since a number of techniques used by \cite{ESS_2003} seem unavailable in the SGM (such as the $L_{s,l}$-coercive estimates for solutions of the non-stationary Stokes system) and since the $L^1$ space is not reflexive.

However, given the number of similarities between the SGM and the NSE, as developed in the recent years, it is expected that the case $q=1$, $q'=\infty$ gives  regularity as well.
\section*{Acknowledgements}

I would like to thank James Robinson for the careful reading of a draft of this article. His numerous comments significantly improved its quality.  

The author is supported by EPSRC as part of the MASDOC DTC at the University of Warwick, Grant No. EP/HO23364/1.
\bibliography{literature}{}

\end{document}